\newtheorem{theorem}{Theorem}[section]
\newtheorem{lemma}[theorem]{Lemma}
\theoremstyle{definition}
\newtheorem{definition}[theorem]{Definition}
\newtheorem{remark}{Remark}
\newcommand{\eps}[1]{{#1}_{\varepsilon}}
\newcommand{\del}{\partial} 
\newcommand{\norm}[1]{\ensuremath{\left\| #1 \right\|}}
\newcommand{\ip}[2]{\ensuremath{\left\langle #1, #2 \right\rangle}}
\newcommand{\grad}{\nabla}
\def\eps{{\varepsilon}}
\def\bfR{\mathbb{R}}
\def\bfN{\mathbb{N}}
\title[Increased Parabolic Regularity] 
      {A Short Proof of Increased Parabolic Regularity}
\author[Stephen Pankavich and Nicholas Michalowski]{}
\subjclass{Primary: 35K14, 35K40; Secondary: 35A05.}
 \keywords{Partial differential equations, Uniformly parabolic, Regularity, Fokker-Planck, diffusion.}
 \email{pankavic@mines.edu}
 \email{nmichalo@nmsu.edu}
\thanks{The first author is supported by NSF grant DMS-1211667.}
\begin{document}
\maketitle

\centerline{\scshape Stephen Pankavich }
\medskip
{\footnotesize
 \centerline{Department of Applied Mathematics and Statistics}
   \centerline{Colorado School of Mines}
   \centerline{Golden, Colorado 80401 USA}
} 

\medskip

\centerline{\scshape Nicholas Michalowski}
\medskip
{\footnotesize
 \centerline{Department of Mathematical Sciences}
   \centerline{New Mexico State University}
   \centerline{Las Cruces, New Mexico 88003 USA}
}

\bigskip

 \centerline{(Communicated by the associate editor name)}

\begin{abstract}
We present a new, short proof of the increased regularity obtained by solutions to uniformly parabolic partial differential equations.
Though this setting is fairly introductory, our new method of proof, which uses \emph{a priori} estimates, can be extended to prove analogous results for problems with time-dependent coefficients, transport equations, and nonlinear equations even when other tools, such as semigroup methods or the use of explicit fundamental solutions, are unavailable.
\end{abstract}

\section{Introduction}
It is well-known that solutions of uniformly parabolic partial differential equations possess a smoothing property.  That is, beginning with initial data which may fail to be even weakly differentiable, the solution becomes extremely smooth, gaining spatial derivatives at any time $t > 0$.
Though this property is well-established, such a result is often excluded from many standard texts in PDEs \cite{Evans, Folland, GT, Han, John, RR}.  
Of course, these works contain theorems demonstrating the regularity of solutions, but the same degree of regularity is assumed for the initial data. The notable exception is \cite[Thm 10.1]{Brezis} in which a gain of regularity theorem is proved, specifically for the heat equation with initial data in $L^2(\Omega)$ using semigroup methods. 
Even more concentrated works on the subject of parabolic PDE \cite{Friedman, Lieberman} do not contain such results regarding increased regularity of solutions to these equations. 

In the current paper, we will present a few results highlighting the increased regularity that solutions of these equations possess.
In particular, our method of proof is both new and brief, and relies on \emph{a priori} estimates. Hence, when traditional tools like semigroup methods or explicit fundamental solutions cannot be utilized, the new approach contained within may still be effective.

\section{Main Results}

Let $n \in \bfN$ be given.  We consider the Cauchy problem
\begin{equation}
\left \{
\begin{gathered}
\label{Par}
\partial_t u - \grad \cdot (D(x) \grad u) = f(x), \quad x \in \bfR^n, t>0\\
u(0,x) = u_0(x), \quad x \in \bfR^n
\end{gathered}
\right.
\end{equation}
where $D$, the diffusion matrix, and $f$, a forcing function, are both given.
Equations like (\ref{Par}) arise within countless applications as diffusion is of fundamental importance to physics, chemistry, and biology, especially for problems in thermodynamics, neuroscience, cell biology, and chemical kinetics. 
As we are interested in displaying the utility of our method of proof, we wish to keep the framework of the current problem
relatively straightforward.  
Thus, we will assume throughout that the diffusion matrix $D = D(x)$ satisfies the uniform ellipticity condition
\begin{equation}
\label{UE}
 w \cdot D(x) w \geq  \theta \vert w \vert^2
\end{equation}
for some $\theta > 0$ and all $x, w \in \bfR^n$.
We note that under suitable conditions on the spatial decay of $u$, our method may also be altered to allow for diffusion coefficients that are not \emph{uniformly} elliptic (see \cite{RVMFP}).
Additionally, we will impose different regularity assumptions on $D$ and $f$ to arrive at different conclusions regarding the regularity
of the solution $u$.

Throughout the paper we will only assume that the initial data $u_0$ is square integrable.
Hence, even though $u_0(x)$ may fail to possess even a single weak derivative, we will show that $u(t,x)$ gains spatial derivatives in $L^2$ on $(0,\infty) \times \bfR^n$.  Hence, by the Sobolev Embedding Theorem, solutions may be classically differentiable in $x$ assuming enough regularity of the coefficients.  In addition, we will show that $u$ is continuous in time at any instant after the initial time $t = 0$.
Though the setting (\ref{Par}) is fairly introductory and the assumptions on $D$ and $f$ are somewhat strong, the new method of proof can be adapted to extend the results to problems with time-dependent terms, transport equations, systems of parabolic PDEs, different spatial settings such as a bounded domain or manifold, and nonlinear equations, including nonlinear Fokker-Planck equations and nonlinear transport problems arising in Kinetic Theory \cite{NFP, VMFP, RVMFP}.

For the proofs, we will rely on \emph{a priori} estimation and the standard Galerkin approximation to obtain regularity
of the approximating sequence and then pass to the limit in order to obtain increased regularity of the solution.
Hence, we focus on deriving the appropriate estimates as the remaining machinery is standard (cf. \cite{Evans, GT}).
In what follows, $C >0$ will represent a constant that may change from line to line, and
for derivatives we will use the notation $$\Vert \grad_x^k u(t) \Vert_2^2 := \sum_{\vert \alpha \vert = k} \Vert \partial_x^\alpha u(t) \Vert_2^2$$
to sum over all multi-indices of order $k \in \bfN$.  When necessary, we will specify parameters on which constants may depend by using a subscript (e.g., $C_T$).

Our first result establishes the main idea for low regularity of $D$ and $f$.

\begin{theorem}[Lower-order Regularity]
\label{L1}
Assume $f \in H^1(\bfR^n)$, $D \in W^{1,\infty}(\bfR^n; \bfR^{n \times n})$,
and $u_0 \in L^2(\bfR^n)$.  Then, for any $T > 0$ and $t \in (0,T]$, any solution of (\ref{Par})
satisfies
$$\sup_{0\leq t \leq T}\norm{u(t)}_2^2\leq C_T (\Vert u_0
\Vert_2^2+\norm{f}_{H^1}^2)\quad\text{and}\quad\norm{\grad_xu(t)}_2^2 \leq \frac{C_T}{t}(\Vert u_0
\Vert_2^2+\norm{f}_{H^1}^2).$$
\end{theorem}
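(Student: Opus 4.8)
The plan is to obtain both bounds as \emph{a priori} estimates under the tacit assumption that $u$ is as regular as the manipulations require, and then to recover them for the genuine (weak) solution by performing the identical computations on the finite-dimensional Galerkin approximants, where every quantity is finite, and passing to the limit using the weak and weak-$*$ lower semicontinuity of the norms. Since the excerpt declares the Galerkin machinery standard, I would concentrate exclusively on deriving the two differential inequalities and would not dwell on the limiting argument.

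First I would establish the $\sup$-bound. Testing \eqref{Par} against $u$ and integrating the diffusion term by parts gives
$$\tfrac12\tfrac{d}{dt}\norm{u}_2^2 + \int_{\bfR^n}\grad u\cdot D\grad u\,dx = \int_{\bfR^n} fu\,dx.$$
The uniform ellipticity \eqref{UE} bounds the middle term below by $\theta\norm{\grad u}_2^2$, while Young's inequality controls the right-hand side by $\tfrac12\norm{f}_2^2+\tfrac12\norm{u}_2^2$, yielding
$$\tfrac{d}{dt}\norm{u}_2^2 + 2\theta\norm{\grad u}_2^2 \leq \norm{f}_2^2 + \norm{u}_2^2.$$
Since $f$ is time-independent, Gronwall's inequality produces the first claimed estimate with a constant of the form $C_T = e^T(1+T)$. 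Integrating the same inequality over $[0,T]$ simultaneously yields the auxiliary bound $\int_0^T\norm{\grad u(s)}_2^2\,ds \leq C_T(\norm{u_0}_2^2+\norm{f}_2^2)$, which will drive the gradient estimate.

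For the gain-of-regularity bound I would differentiate \eqref{Par} in a spatial direction $x_k$. Setting $v=\partial_{x_k}u$ and using $D\in W^{1,\infty}$ to guarantee $\partial_{x_k}D\in L^\infty$, the function $v$ solves
$$\partial_t v - \grad\cdot(D\grad v) = \grad\cdot\bigl((\partial_{x_k}D)\grad u\bigr) + \partial_{x_k}f.$$
Testing against $v$, applying \eqref{UE}, and absorbing the term $\int\grad v\cdot(\partial_{x_k}D)\grad u$ into $\tfrac\theta2\norm{\grad v}_2^2$ via Young's inequality, then summing over $k$, I expect to reach
$$\tfrac{d}{dt}\norm{\grad u}_2^2 \leq C\norm{\grad u}_2^2 + \norm{\grad f}_2^2,$$
where the appearance of $\norm{\grad f}_2$ is precisely what forces the hypothesis $f\in H^1$. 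The decisive step is then the $t$-weighted trick: the inequality above gives $\tfrac{d}{dt}\bigl(t\norm{\grad u}_2^2\bigr) \leq (1+CT)\norm{\grad u}_2^2 + t\norm{\grad f}_2^2$, and integrating over $[0,t]$ controls the first right-hand term by the auxiliary $\int_0^T\norm{\grad u}_2^2$ bound and the second by $\tfrac{T^2}{2}\norm{\grad f}_2^2$. This yields $t\,\norm{\grad u(t)}_2^2 \leq C_T(\norm{u_0}_2^2+\norm{f}_{H^1}^2)$, and dividing by $t$ finishes the proof.

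The main obstacle I anticipate is not any individual inequality but the legitimacy of differentiating the equation in $x$: for merely $L^2$ data the second derivatives $\grad^2 u$ need not exist a priori, so the differentiated computation must be carried out on the smooth Galerkin approximants and the resulting bounds, being uniform in the approximation parameter, transferred to $u$ in the limit. Conceptually, the heart of the argument is the weight $t$, which converts the finite-time-integrated control of $\norm{\grad u}_2^2$ into a pointwise-in-time bound degenerating like $1/t$ as $t\to 0^+$, thereby capturing the instantaneous smoothing effect.
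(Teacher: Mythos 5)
Your proposal is correct and follows essentially the same route as the paper: both derive the zeroth- and first-order energy inequalities from \eqref{UE} and then exploit a linear-in-$t$ weight on $\norm{\grad_x u}_2^2$, with the dissipation term $-\theta\norm{\grad_x u}_2^2$ from the $L^2$ estimate absorbing the term produced by differentiating the weight. The only (cosmetic) difference is bookkeeping: the paper applies Gronwall to the combined functional $M_1(t)=\norm{u(t)}_2^2+\tfrac{\theta t}{2}\norm{\grad_x u(t)}_2^2$, whereas you integrate $\tfrac{d}{dt}\bigl(t\norm{\grad_x u}_2^2\bigr)$ in time and invoke the space-time bound $\int_0^T\norm{\grad_x u}_2^2\,ds\leq C_T$ directly.
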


\begin{proof} 
We first prove two standard estimates of (\ref{Par}).  First, we multiply by $u$, integrate the equation in $x$, integrate by parts and use Cauchy's
Inequality to find
$$\frac{1}{2} \frac{d}{dt} \Vert u(t) \Vert_2^2 + \int_{\bfR^n}\grad_x u(t) \cdot D \grad_x u(t) dx \  \leq \frac{1}{2} \left ( \Vert f \Vert_2^2 + \Vert u(t) \Vert_2^2 \right ).$$
Then, using (\ref{UE}) and the regularity assumption on $f$, we find
\begin{equation}
\label{L2}
\frac{1}{2} \frac{d}{dt} \Vert u(t) \Vert_2^2  \leq C \left (\norm{f}_{H^1}^2 + \Vert u(t) \Vert_2^2 \right ) - \theta \Vert \grad_x u(t) \Vert_2^2.
\end{equation}
Next, we take any first-order derivative with respect to $x$ (denoted by $\partial_x$) of the equation, multiply by $\partial_x u$, and integrate to obtain
$$ \frac{1}{2} \frac{d}{dt} \Vert \partial_x u(t) \Vert_2^2 +\int_{\bfR^n}\grad_x \partial_x u(t) \cdot D \grad_x \partial_x u(t) dx \ + \int_{\bfR^n} \grad_x \partial_x u \cdot \partial_x D \grad_x u \ dx 
= \int_{\bfR^n} \partial_x f \partial_x u \ dx.$$
Thus, using Cauchy's inequality with the ellipticity and regularity assumptions, we find for any $\eps > 0$
\begin{align*}
\frac{1}{2} \frac{d}{dt} \Vert \partial_x u(t) \Vert_2^2 & \leq   - \int_{\bfR^n}\grad_x \partial_x u(t) \cdot D \grad_x \partial_x u(t) dx - \int_{\bfR^n} \grad_x \partial_x u \cdot \partial_x D \grad_x u \ dx 
+ \int_{\bfR^n} \partial_x f \partial_x u \ dx\\
& \leq   - \theta \Vert \grad_x \partial_x u(t) \Vert_2^2 + \Vert D \Vert_{W^{1,\infty}} \left ( \eps \Vert \grad_x \partial_x u(t) \Vert_2^2 + \frac{1}{\eps} \Vert \grad_x u(t) \Vert_2^2 \right )\\
&  \quad  +  \frac{1}{2} \left ( \Vert \partial_x f \Vert_2^2 + \Vert \partial_x u(t) \Vert_2^2 \right ).
\end{align*}
Choosing $\eps = \theta(2\Vert D \Vert_{W^{1,\infty}})^{-1}$ and summing over all first-order spatial derivatives, we finally arrive at the estimate
\begin{equation}
\label{H1}
\frac{1}{2} \frac{d}{dt} \Vert \grad_x u(t) \Vert_2^2 
 \leq C \left (\norm{f}_{H^1}^2+ \Vert \grad_x u(t) \Vert_2^2 \right )
 - \frac{\theta}{2} \Vert \grad_x^2 u(t) \Vert_2^2.
\end{equation}

Now, we utilize a linear expansion in $t$ to prove the theorem.  Let $T > 0$ be given. Consider $t \in (0,T]$ and define
$$M_1(t) = \Vert u(t) \Vert_2^2 + \frac{\theta t}{2} \Vert \grad_x u(t) \Vert_2^2.$$
We differentiate this quantity, and use the estimates (\ref{L2}) and (\ref{H1}) to find
\begin{eqnarray*}
M_1'(t) & = & \frac{d}{dt} \Vert u(t) \Vert_2^2 + \frac{\theta}{2} \Vert \grad_x u(t) \Vert_2^2 +  \frac{\theta t}{2} \frac{d}{dt} \Vert \grad_x u(t) \Vert_2^2 \\
& \leq & C \left (\norm{f}_{H^1}^2 + \Vert u(t) \Vert_2^2 \right ) - 2\theta \Vert \grad_x u(t) \Vert_2^2 + \frac{\theta}{2} \Vert \grad_x u(t) \Vert_2^2 \\
& \ & \ \ +  \frac{\theta t}{2} \left [  2C \left (\norm{f}_{H^1}^2+ \Vert \grad_x u(t) \Vert_2^2 \right )
 - \theta \Vert \grad_x^2 u(t) \Vert_2^2 \right ]\\
& \leq & C_T\left (\norm{f}_{H^1}^2 + M_1(t) \right )
\end{eqnarray*}
A straightforward application of Gronwall's inequality (cf. \cite{Evans}) then implies
$$ M_1(t) \leq C_T (M_1(0)+\norm{f}_{H^1}^2) = C_T (\Vert u_0 \Vert_2^2+\norm{f}_{H^1}^2).$$
Finally, the bound on $M_1(t)$ yields
$$ \Vert u(t) \Vert_2^2 \leq C_T, \quad \Vert \grad_x u(t) \Vert_2^2 \leq \frac{C_T}{\theta t}$$
and the estimate holds on the interval $(0,T]$.  As $T > 0$ is arbitrary, the result follows.

%
\end{proof}

Next, we formulate the existence of weak solutions for our lower-order
regularity setting.

\begin{definition}
We say that $u \in L^2([0,T];H^1(\bfR^n))$, with $\del_t u \in L^2([0,T];H^{-1}(\bfR^n)$ is a weak solution of \eqref{Par} if 
\begin{equation}
\left \{
\begin{gathered}
\label{weak_Par}
\ip{\del_t u}{v} +\ip{D(x) \grad u}{\grad v} = \ip{f}{v} \\
u(0,x) = u_0(x), \quad x \in \bfR^n
\end{gathered}
\right.
\end{equation}
for every $v \in H^1(\bfR^n)$ and $t\in [0,T]$.
\end{definition}

\begin{theorem}[Existence and Uniqueness of Weak solutions]\label{weak_exist}
   Given any $u_0\in L^2(\bfR^n)$ and $f\in H^1(\bfR^n)$ and $T>0$
   arbitrary, there exists a unique $u \in C((0,T]; H^1(\bfR^n))
   \cap C([0,T]; L^2(\bfR^n))$ and $u'\in L^2(([0,T]; H^{-1}(\bfR^n))$
   that solves \eqref{weak_Par}.
\end{theorem}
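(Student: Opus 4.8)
The plan is to construct the solution by the standard Galerkin method and to read off the claimed regularity from the \emph{a priori} estimates already established in the proof of Theorem \ref{L1}, applied now to the approximating sequence, where every manipulation is rigorous. First I would fix an orthonormal basis $\{w_k\}_{k=1}^\infty$ of $L^2(\bfR^n)$ whose elements lie in $H^1(\bfR^n)$ and whose span is dense in $H^1(\bfR^n)$ (for instance the Hermite functions). Seeking approximate solutions $u_m(t) = \sum_{k=1}^m d_k^m(t)\, w_k$ satisfying the projected system
$$\ip{\del_t u_m}{w_k} + \ip{D\grad u_m}{\grad w_k} = \ip{f}{w_k}, \qquad k = 1,\dots,m,$$
with $u_m(0)$ the $L^2$-projection of $u_0$ onto $\mathrm{span}\{w_1,\dots,w_m\}$, reduces the problem to a linear constant-coefficient ODE system for the $d_k^m$, which has a unique solution on $[0,T]$ by elementary theory.

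The next step is to obtain bounds on $u_m$ uniform in $m$. Since $u_m$ is a finite combination of $H^1$ functions, the formal computations leading to \eqref{L2} and \eqref{H1} are fully justified with $u$ replaced by $u_m$ (here one uses that $\grad_x u_m$ again lies in the span, so testing against $\del_x u_m$ is legitimate). Integrating \eqref{L2} in time and applying Gronwall exactly as in Theorem \ref{L1} gives
$$\sup_{0\le t\le T}\norm{u_m(t)}_2^2 + \int_0^T \norm{\grad_x u_m(t)}_2^2\,dt \le C_T\left(\norm{u_0}_2^2 + \norm{f}_{H^1}^2\right),$$
so $u_m$ is bounded in $L^2([0,T];H^1(\bfR^n))$ and in $L^\infty([0,T];L^2(\bfR^n))$, while reading off $\norm{\del_t u_m}_{H^{-1}} \le \norm{f}_2 + \norm{D}_\infty \norm{\grad_x u_m}_2$ from the equation bounds $\del_t u_m$ uniformly in $L^2([0,T];H^{-1}(\bfR^n))$. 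By weak and weak-$*$ compactness I would extract a subsequence with $u_m \rightharpoonup u$ in $L^2([0,T];H^1)$ and $\del_t u_m \rightharpoonup \del_t u$ in $L^2([0,T];H^{-1})$. Because every term of the weak formulation is linear in $u$, passing to the limit is immediate after testing against elements of $\mathrm{span}\{w_k\}$ and invoking density, so the limit $u$ satisfies \eqref{weak_Par}. The standard result that $u \in L^2([0,T];H^1)$ with $\del_t u \in L^2([0,T];H^{-1})$ forces $u \in C([0,T];L^2(\bfR^n))$ (cf. \cite{Evans}) then makes the initial condition meaningful and verifies $u(0)=u_0$ in the usual way.

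The continuity $u \in C((0,T];H^1(\bfR^n))$ is precisely where the gain-of-regularity estimates are indispensable. Fixing $\delta > 0$, the second conclusion of Theorem \ref{L1} gives $u \in L^\infty([\delta,T];H^1)$, while integrating \eqref{H1} over $[\delta,T]$ and using the already-established $H^1$ bound produces $\int_\delta^T \norm{\grad_x^2 u(t)}_2^2\,dt < \infty$, so that $u \in L^2([\delta,T];H^2(\bfR^n))$. Reading the equation as $\del_t u = \grad\cdot(D\grad u) + f$ together with $D \in W^{1,\infty}$ then shows $\del_t u \in L^2([\delta,T];L^2)$. Applying the same Gelfand-triple embedding as above, now to $H^2 \hookrightarrow H^1 \hookrightarrow (H^2)^*$, yields $u \in C([\delta,T];H^1)$, and since $\delta>0$ is arbitrary the claim follows.

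Finally, uniqueness follows from the basic energy estimate: if $u_1,u_2$ both solve \eqref{weak_Par}, their difference $w = u_1 - u_2$ is a weak solution with $f=0$ and $w(0)=0$; testing with $v = w(t)$ and using \eqref{UE} gives $\tfrac{d}{dt}\norm{w(t)}_2^2 \le C\norm{w(t)}_2^2$, so Gronwall forces $w\equiv 0$. I expect the main obstacle to be the passage to $C((0,T];H^1)$ rather than existence itself: the bound on $\grad_x u$ degenerates like $1/t$ as $t\to 0^+$, continuity in $H^1$ genuinely fails up to $t=0$, and it is exactly the weighted estimates of Theorem \ref{L1} that must be exploited on each interval $[\delta,T]$ to recover $H^1$-continuity away from the initial time. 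A secondary technical point is that, unlike on a bounded domain, the embedding $H^1(\bfR^n)\hookrightarrow L^2(\bfR^n)$ is not compact; however, the linearity of \eqref{Par} renders weak convergence sufficient for the limit passage, so this causes no difficulty here.
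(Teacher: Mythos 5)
Your proposal is correct and, for most of its length, runs parallel to the paper's own proof: the same projected ODE system for the Galerkin coefficients, the same uniform bounds on $u_m$ in $L^\infty([0,T];L^2)\cap L^2([0,T];H^1)$ and on $\del_t u_m$ in $L^2([0,T];H^{-1})$, weak compactness, limit passage by linearity and density of the test functions, continuity into $L^2$ from the standard lemma, recovery of $u(0)=u_0$, and uniqueness via the energy estimate applied to the difference of two solutions. The one step where you take a genuinely different route is the continuity $u\in C((0,T];H^1)$. You integrate \eqref{H1} on $[\delta,T]$ to get $u\in L^2([\delta,T];H^2)$, read $\del_t u\in L^2([\delta,T];L^2)$ off the equation, and apply the Gelfand-triple continuity lemma one rung higher ($L^2_tH^2$ together with $\del_t u\in L^2_tL^2$ gives $C([\delta,T];H^1)$). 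The paper instead applies the weighted \emph{a priori} estimate of Theorem \ref{L1} to the time-shifted difference $w_s(t)=u(t+s)-u(t)$, which solves the homogeneous equation with datum $u(s)-u_0$, so that the already-established continuity into $L^2$ forces $\norm{\grad_x u(t+s)-\grad_x u(t)}_2\to 0$ for each fixed $t>0$. Your route yields interior $H^2$ regularity as a useful byproduct but needs the higher-order version of the continuity lemma; the paper's translation trick is more self-contained, reusing only the estimate already proved plus the time-independence of $D$ and $f$. One caveat: your parenthetical justification that "$\grad_x u_m$ again lies in the span" is false for a generic basis (derivatives of Hermite functions involve neighboring modes outside $\mathrm{span}\{w_1,\dots,w_m\}$), so the derivation of \eqref{H1} at the Galerkin level really requires a basis adapted to the operator or a difference-quotient argument on the limit; since the paper itself elides this by asserting the estimates "repeat verbatim," it is a shared informality rather than a gap relative to the printed proof.
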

\begin{proof}
  We follow a standard Galerkin approach. Take
  $\{w_k(x)\}_{k=0}^\infty$ to be an orthonormal basis for $L^2$ with
  $w_k\in H^s$ for $s\geq 0$. 
  Consider functions of the form $u_m(x,t)=\sum_{k=0}^m d_k^m(t)
  w_k(x)$ with $d_k^m(t)$ a smooth function of $t$.  Then the equations
  \begin{gather*}
  \ip{\del_t u_m}{w_k} +\ip{D(x) \grad u_m}{\grad w_k} = \ip{f}{w_k} \\
      \ip{u_m(0,\cdot)}{w_k} = \ip{u_0}{w_k}
  \end{gather*}
  for $k=1,2,\ldots m$ reduce to a constant coefficient first order
  system of ODE's for $d_k^m(t)$, and hence existence of approximate
  solutions is readily established.

  For these solutions, $u_m(t)$, we may repeat the proof of our \emph{a priori}
  estimates verbatim.  Thus we can conclude that
  $$\sup_{0\leq t\leq T} \Big(\norm{u_m(t)}_2^2+\frac{\theta
    t}{2}\norm{\grad_x u_m(t)}_2^2 \Big)\leq C_T\norm{u_0}_2^2.$$
  From the proof, we also have the inequality
  $$\int_0^T\frac{1}{2}\frac{d}{dt}\norm{u_m(t)}_2^2\,dt+\int_0^T\theta\norm{\grad_x u_m}_2^2\,dt\leq
  C_T\int_0^T\Big(\norm{f}_2^2+\norm{u_0}_2^2\Big)\,dt.$$
  Using the above control of $\sup_{0\leq t \leq T} \norm{u_m(t)}_2^2$,
  we find that $$\int_0^T\norm{u_m}_{H^1(\bfR^n)}^2\leq
  C_T\Big(\norm{f}_2^2+\norm{u_0}_2^2\Big).$$
  Finally, fix $v\in H^1(\bfR^n)$ with $\norm{v}_{H^1}\leq 1$ and
  consider $$\ip{\partial_t u_m}{v}=-\ip{D(x)\grad_x
    u_m}{\grad_x v}+\ip{f}{v}.$$ 
  Using Cauchy-Schwartz and taking the supremum over $v\in H^1$ with
  $\norm{v}_{H^1}\leq 1$ we find
  $\norm{\del_t u_m(t)}_{H^{-1}}\leq
  C_T\Big(\norm{f}_2+\norm{u_0}_2\Big)$ and 
  $$\int_0^T\norm{\del_t u_m (t)}_{H^{-1}}^2 \ dt \leq
  C_T\Big(\norm{f}_2^2+\norm{u_0}_2^2\Big).$$
  Thus, $u_m$ is a bounded sequence in $L^2([0,T];H^1(\bfR^n))$ and $\del_t
  u_m$ is a bounded sequence in $L^2([0,T];H^{-1}(\bfR^n)),$ so we may extract
  a subsequence ${m_j}$ so that $u_{m_j}\to u$ in $L^2([0,T];H^1(\bfR^n))$ and
  $\del_tu_{m_j}\to \del_t u$ in $L^2([0,T];H^{-1}(\bfR^n))$.
  
  Now fix an integer $N$ and consider $v(t)=\sum_{k=0}^N
  d_k(t)w_k(x)$, where $d_k(t)$ are fixed smooth functions.  Then for
  $m_j>N$, we have
  $$\int_0^T \ip{\del_t u_{m_j}}{v} +\ip{D(x) \grad u_{m_j}}{\grad v}\,dt = \int_0^T
  \ip{f}{v}\,dt.$$ 
  Thus passing to the limit 
  $$\int_0^T \ip{\del_t u}{v} +\ip{D(x) \grad u}{\grad v}\,dt = \int_0^T
  \ip{f}{v}\,dt.$$ 
  Since $v$ given above are dense in
  $L^2([0,T];H^1(\bfR^n))$, the equality holds for any $v$ in
  this space.  Since $u\in L^2([0,T];H^1(\bfR^n))$ and $\del_t u \in
  L^2([0,T];H^{-1}(\bfR^n))$ we have that $u\in C([0,T];L^2(\bfR^n)$
  by \cite[Thm.~3~\S 5.9.2]{Evans}.

  To see $u(0)=u_0$, we take $v\in C^1([0,T];H^1(\bfR^n))$ with the
  property that $v(T)=0$, then we find that 
  $$\ip{u_{m_j}(0)}{v(0)}-\int_0^T \ip{u_{m_j}}{\del_t v} +\ip{D(x) \grad u_{m_j}}{v}\,dt = \int_0^T
  \ip{f}{v}\,dt.$$ 
  Notice $u_{m_j}(0)=\sum_{k=0}^{m_j} \ip{u_0}{w_k} w_k \to u_0$ in
  $L^2(\bfR^n)$ as $m_j\to \infty$.  Thus passing to the limit, we
  find that $\ip{u(0)}{v(0)}=\ip{u_0}{v(0)}$ for $v(0)$ arbitrary.
  Hence $u(0)=u_0$.

  To prove uniqueness, notice that for any two
  solutions $u$ and $\tilde u$ the difference $u-\tilde u$ satisfies
  our equation with $u_0=0$ and $f=0$.  Thus our a priori estimate
  gives that $\sup_{0\leq t\leq T}\norm{u(t)-\tilde u(t)}_2^2\leq 0$
  and uniqueness follows immediately.
  
  Finally, to show that $u\in C((0,T];H^1(\bfR^n))$ we consider
  $w_s(t)=u(t+s)-u(t)$.  Then $w_s(t)$ satisfies our equation with
  $f=0$ and $w(0)=u_0-u(s)$.  From our \emph{a priori} estimate
  $$\norm{w_s(t)}+\frac{\theta t}{2}\norm{\grad_x w_s(t)}_2^2 \leq
    C_T(\norm{u_0-u(s)}_2^2).$$
  From the fact that $u\in C([0,T];L^2(\bfR^n))$, we have for $t>0$ that
  $\lim_{s\to 0} \norm{u(t+s)-u(t)}=0$ and $\lim_{s\to 0}\norm{\grad_x
  u(t+s)-\grad_xu(t)}=0$, whence the result follows.
\end{proof}

Next, we extend the previous estimate to higher regularity assuming that $D$ and $f$ possess additional weak derivatives.

\begin{lemma}[Higher-order Regularity]
\label{T1}
For every $m \in \bfN$, if $f \in H^m(\bfR^n)$, $D \in W^{m,\infty}(\bfR^n; \bfR^{n \times n})$,
and $u_0 \in L^2(\bfR^n)$, then the previously derived solution of
(\ref{Par}) satisfies 
$$\norm{\grad_x^k u}_2^2\leq \frac{C_T}{t^k}\left( \norm{u_0}_2^2 +
  \norm{f}_{H^k}^2\right)\quad \text{for } k=0, 1, \ldots, m.$$
\end{lemma}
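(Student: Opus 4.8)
The plan is to reproduce the weighted-energy scheme of Theorem \ref{L1} one order at a time, carrying it up to order $m$. The backbone is a family of differential inequalities generalizing the estimates (\ref{L2}) and (\ref{H1}): for each $k$ with $1 \le k \le m$ I claim
$$\frac{1}{2}\frac{d}{dt}\norm{\grad_x^k u(t)}_2^2 \le C\Big(\norm{f}_{H^k}^2 + \sum_{j=1}^{k}\norm{\grad_x^j u(t)}_2^2\Big) - \frac{\theta}{2}\norm{\grad_x^{k+1} u(t)}_2^2.$$
To obtain it I would apply a spatial derivative $\partial_x^\alpha$ with $\abs{\alpha}=k$ to (\ref{Par}), multiply by $\partial_x^\alpha u$, and integrate. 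The principal diffusion term gives $-\theta\norm{\grad_x^{k+1}u}_2^2$ through (\ref{UE}) exactly as before. The genuinely new ingredient is the Leibniz expansion of $\partial_x^\alpha\big(\grad\cdot(D\grad u)\big)$, which produces commutator terms $\grad\cdot\big((\partial_x^\beta D)\grad\,\partial_x^{\alpha-\beta}u\big)$ with $0 < \beta \le \alpha$. After integrating by parts, each pairs a factor carrying $k+1$ derivatives of $u$ against a factor carrying at most $k$ derivatives of $u$ (because $\abs{\beta}\ge 1$) and at most $k$ derivatives of $D$, the latter bounded since $D \in W^{k,\infty}$. Cauchy's inequality with a small parameter then dominates every commutator by $\eps\norm{\grad_x^{k+1}u}_2^2 + C_\eps\sum_{j=1}^k\norm{\grad_x^j u}_2^2$; taking $\eps = \theta/2$ and summing over $\abs{\alpha}=k$ yields the displayed inequality.

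With these in hand, fix $k \le m$ and imitate the linear-in-$t$ trick, now with a full polynomial weight. I would set
$$M_k(t) = \sum_{j=0}^{k} c_j\, t^j \norm{\grad_x^j u(t)}_2^2, \qquad c_0 = 1, \quad c_j = \frac{\theta^j}{2^j\, j!},$$
the weights being chosen so that $c_{j-1}\theta \ge c_j j$. Differentiating, the term $\frac{d}{dt}\big(c_j t^j \norm{\grad_x^j u}_2^2\big)$ generates the dangerous lower-weight contribution $c_j\, j\, t^{j-1}\norm{\grad_x^j u}_2^2$; this is precisely absorbed by the dissipative term $-c_{j-1}\theta\, t^{j-1}\norm{\grad_x^j u}_2^2$ coming from the $(j-1)$-th inequality, which is the structural reason the weights must decay. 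The surviving positive contributions all have the shape $t^j\norm{\grad_x^i u}_2^2$ with $i \le j \le k$, and since $t \le T$ each is bounded by $C_T$ times a summand of $M_k$; the top-order dissipation $-c_k\theta\, t^k\norm{\grad_x^{k+1}u}_2^2$ is simply discarded.

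Collecting everything gives $M_k'(t) \le C_T\big(\norm{f}_{H^k}^2 + M_k(t)\big)$, so Gronwall's inequality together with $M_k(0) = \norm{u_0}_2^2$ produces $M_k(t) \le C_T\big(\norm{u_0}_2^2 + \norm{f}_{H^k}^2\big)$. Since every summand of $M_k$ is nonnegative, isolating the $k$-th one yields $\norm{\grad_x^k u(t)}_2^2 \le C_T\, t^{-k}\big(\norm{u_0}_2^2 + \norm{f}_{H^k}^2\big)$. Because the run for a given $k$ uses only $f \in H^k$ and $D \in W^{k,\infty}$ --- both available as $k \le m$ --- repeating it for $k = 0, 1, \ldots, m$ establishes the lemma. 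As in Theorems \ref{L1} and \ref{weak_exist}, all of this is carried out first on the Galerkin approximants, where the derivatives are legitimate, and then passed to the limit.

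The hard part will not be Gronwall or the passage to the limit, which are routine, but the two pieces of bookkeeping that must mesh: confirming in the energy step that every commutator genuinely leaves at most $k$ derivatives on its low factor (so the right-hand side never exceeds order $k$), and then pinning the decay rate of the weights $c_j$ against $\theta$ so that the telescoping absorption holds simultaneously at every level. Once that decay is fixed, the computation is a direct, if lengthier, replay of the $M_1$ estimate.
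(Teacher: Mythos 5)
Your proposal is correct and follows essentially the same route as the paper: the same commutator estimate at order $k$ with the dissipative term $-\tfrac{\theta}{2}\norm{\grad_x^{k+1}u}_2^2$ retained, the same weighted energy $\sum_j \frac{(\theta t)^j}{2^j j!}\norm{\grad_x^j u}_2^2$, and the same telescoping absorption of the $j\,t^{j-1}$ terms into the level-$(j-1)$ dissipation before applying Gronwall. The only cosmetic difference is that you bound the surviving lower-order terms directly via $t^j\norm{\grad_x^i u}_2^2 \le T^{j-i}\, t^i\norm{\grad_x^i u}_2^2$ for $t\le T$, whereas the paper phrases the argument as an induction on $m$ and invokes the inductive bound on $\norm{u}_{H^{k-1}}$ at that point; both work.
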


\begin{proof} 
We will prove the result by induction on $m$.  The base case ($m=1$) follows immediately from Lemma \ref{L1}.
Prior to the inductive step, we first prove a useful estimate for solutions of (\ref{Par}).  
For the estimate, assume $D$ and $f$ possess $k \in \mathbb{N}$ derivatives in $L^\infty$ and $L^2$, respectively.
Take any $k$th-order derivative with respect to $x$ (denoted by $\partial^\alpha_x$) of the equation, multiply by $\partial^\alpha_x u$, and integrate using integration by parts to obtain
$$ \frac{1}{2} \frac{d}{dt} \Vert \partial^\alpha_x u(t) \Vert_2^2 +
\int_{\bfR^n} \grad_x \partial_x^\alpha u \cdot
\sum_{j=0}^k\sum_{\substack{|\beta|=j\\ \beta+\gamma=\alpha}} {\alpha \choose \beta} \partial_x^\beta D \grad_x \partial_x^{\gamma} u \ dx 
= \int_{\bfR^n} \partial_x^\alpha f \partial_x^\alpha u \ dx$$
and thus
$$  \frac{1}{2} \frac{d}{dt} \Vert \partial^\alpha_x u(t) \Vert_2^2 = - \int_{\bfR^n} \grad_x \partial_x^\alpha u \cdot \sum_{j=0}^k\sum_{\substack{|\beta|=j\\ \beta+\gamma=\alpha}} {\alpha \choose \beta} \partial_x^\beta D \grad_x \partial_x^{\gamma} u \ dx +  \int_{\bfR^n} \partial_x^\alpha f \partial_x^\alpha u \ dx.$$

Labeling the first term on the right side $A$, we use (\ref{UE}) and the regularity of $D$ with Cauchy's inequality (with $\eps > 0$) to find
\begin{eqnarray*}
A & = & -  \int_{\bfR^n} \grad_x \partial_x^\alpha u \cdot D \grad_x \partial_x^\alpha u \ dx -  \int_{\bfR^n} \grad_x \partial_x^k u \cdot \sum_{j=1}^k\sum_{\substack{|\beta|=j\\ \beta+\gamma=\alpha}} {\alpha \choose \beta} \partial_x^\beta D \grad_x \partial_x^{\gamma} u \ dx \\
& \leq & - \theta \Vert  \grad_x \partial_x^\alpha u(t) \Vert_2^2 + C \Vert D \Vert_{W^{k,\infty}}^2 \left ( \eps \Vert  \grad_x \partial_x^\alpha u(t) \Vert_2^2 + \frac{1}{\eps} \Vert u(t) \Vert_{H^{k-1}}^2 \right )\\
& \leq & - \frac{\theta}2 \Vert  \grad_x \partial_x^\alpha u(t) \Vert_2^2 + C \Vert u(t) \Vert_{H^{k-1}}^2
\end{eqnarray*}
where we have chosen $\eps = \theta \left (2C\Vert D \Vert_{W^{k,\infty}}^2 \right)^{-1}$ in the third line.
Inserting this into the above equality and using Cauchy's inequality again we find
$$  \frac{1}{2} \frac{d}{dt} \Vert \partial^\alpha_x u(t) \Vert_2^2 \leq - \frac{\theta}{2} \Vert \grad_x \partial^\alpha_x u(t) \Vert_2^2  + C \Vert u(t) \Vert_{H^{k-1}}^2 +  \frac{1}{2} \Vert \partial^\alpha_x f \Vert_2^2  +
\frac{1}{2}  \Vert \partial_x^\alpha u(t) \Vert_2^2.$$
Finally, summing over all first-order derivatives and using the regularity of $f$ yields the estimate
\begin{equation}
\label{IndEst}
\frac{1}{2} \frac{d}{dt} \Vert \grad^k_x u(t) \Vert_2^2 \leq - \frac{\theta}{2} \Vert \grad^{k+1}_x u(t) \Vert_2^2 + C \left (\norm{f}_{H^k}^2 + \Vert u(t) \Vert_{H^k}^2 \right ).
\end{equation}

Now, we prove the lemma utilizing this estimate for $k = 0, 1, ..,m$. Assume $f \in H^m(\bfR^n)$ and $D \in W^{m,\infty}(\bfR^n; \bfR^{n \times n})$.  Since this implies that
$f \in H^{m-1}(\bfR^n)$ and $D \in W^{m-1,\infty}(\bfR^n; \bfR^{n \times n})$, we find that 
$u \in C((0,\infty); H^{m-1}(\bfR^n))$ by the induction hypothesis. Let $T>0$ be given.  Consider $t \in (0,T]$ and define
$$M(t) = \sum_{k=0}^m \frac{(\theta t)^k}{2^kk!} \Vert \grad^k_x u(t) \Vert_2^2.$$
We differentiate to find
$$M'(t) = \sum_{k=1}^m \frac{\theta^k t^{k-1}}{2^k(k-1)!} \Vert \grad^k_x u(t) \Vert_2^2 +  \sum_{k=0}^m \frac{(\theta t)^k}{2^kk!} \frac{d}{dt}\Vert \grad^k_x u(t) \Vert_2^2
=: I + II.$$
We use (\ref{IndEst}) for any $k = 0, ..., m$ and relabel the index of the sum so that
\begin{eqnarray*}
II & \leq & \sum_{k=0}^m \frac{(\theta t)^k}{2^kk!} \left ( -\theta \Vert \grad^{k+1}_x u(t) \Vert_2^2 + C \left (\norm{f}_{H^k}^2 + \Vert u(t) \Vert_{H^k}^2 \right ) \right ) \\
& = & -2\sum_{k=0}^m \frac{\theta^{k+1} t^k}{2^{k+1} k!} \Vert \grad^{k+1}_x u(t) \Vert_2^2 + C \sum_{k=0}^m \frac{(\theta t)^k}{2^kk!} \left (\norm{f}_{H^k}^2 + \Vert u(t) \Vert_{H^k}^2 \right )  \\
& \leq & - 2I - \frac{\theta^{m+1} t^m}{2^{m+1}m!}\norm{\grad_x^{m+1}u(t)}_2^2 + C \sum_{k=0}^m \frac{(\theta t)^k}{2^kk!} \left (\norm{f}_{H^k}^2 + \Vert u(t) \Vert_{H^k}^2 \right ) 
\end{eqnarray*}
Notice the induction hypothesis gives $\norm{u(t)}_{H^{k-1}}^2\leq
\frac{C_T}{t^{k-1}}(\norm{f}_{H^{k-1}}^2+\norm{u_0}_2^2$).  Using this bound and the
previous inequality within the estimate of $M'(t)$, we find
\begin{eqnarray*}
M'(t) & \leq & C \sum_{k=0}^m \frac{(\theta t)^k}{2^kk!} \left (\norm{f}_{H^k}^2 + \Vert u(t) \Vert_{H^k}^2 \right )\\
& \leq & C_T \left ( \norm{f}_{H^m}^2 +  \sum_{k=0}^m \frac{(\theta t)^k}{2^kk!} \left [ \Vert \grad_x^k u(t) \Vert_{2}^2 + \Vert u(t) \Vert_{H^{k-1}}^2 \right ] \right )\\
& \leq & C_T \left ( \norm{f}_{H^m}^2 + M(t) + \sum_{k=0}^m \frac{(\theta t)^k}{2^k k!} \frac{C_T}{t^{k-1}} \left ( \Vert f \Vert_{H^{k-1}}^2 + \Vert u_0 \Vert_2^2 \right ) \right ) \\
& \leq & C_T \left ( \norm{f}_{H^m}^2+\norm{u_0}_2^2 + M(t) \right ).  \\
\end{eqnarray*}
Another straightforward application of Gronwall's inequality then implies
$$ M(t) \leq C_T \Big(\norm{f}_{H^m}^2+\norm{u_0}_2^2+M(0)\Big) \leq
C_T(\norm{f}_{H^m}^2 +\Vert u_0 \Vert_2^2).$$
Finally, the bound on $M(t)$ yields
$$ \Vert \grad^m_x u(t) \Vert_2^2 \leq \frac{C_T}{(\theta t)^m}$$
which completes the inductive step and the proof of the lemma.
\end{proof}

\begin{theorem}[Existence and Uniqueness of Weak solutions]
   Let $m \in \mathbb{N}$ be given. For any $u_0\in L^2(\bfR^n)$, $f\in H^m(\bfR^n)$, and $T>0$
   arbitrary, there exists a unique $u \in C((0,T]; H^m(\bfR^n))
   \cap C([0,T]; L^2(\bfR^n))$ and $u'\in L^2(([0,T]; H^{-1}(\bfR^n))$
   that solves \eqref{weak_Par}.
\end{theorem}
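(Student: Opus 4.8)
The plan is to combine the Galerkin construction of Theorem~\ref{weak_exist} with the higher-order a priori bound of Lemma~\ref{T1}. First I would observe that existence and uniqueness of a weak solution in the class $C((0,T];H^1(\bfR^n)) \cap C([0,T];L^2(\bfR^n))$ with $u' \in L^2([0,T];H^{-1}(\bfR^n))$ are already furnished by Theorem~\ref{weak_exist}, since the hypotheses $f \in H^m(\bfR^n)$ and $D \in W^{m,\infty}(\bfR^n;\bfR^{n\times n})$ imply in particular $f \in H^1$ and $D \in W^{1,\infty}$. Thus the genuinely new content is the improved spatial regularity $u \in C((0,T];H^m(\bfR^n))$, together with the quantitative bound of Lemma~\ref{T1}; uniqueness requires no further argument.

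To upgrade the regularity, I would rerun the Galerkin scheme, producing approximations $u_\ell(x,t) = \sum_{k=0}^\ell d_k^\ell(t) w_k(x)$. Because each $u_\ell(\cdot,t)$ is a finite linear combination of the basis functions $w_k \in H^s$ (all $s \ge 0$), it is smooth enough in $x$ that the computation leading to \eqref{IndEst} and the energy functional $M(t)$ in the proof of Lemma~\ref{T1} may be carried out verbatim on $u_\ell$. This yields, uniformly in $\ell$ and for each $k = 0,1,\dots,m$, the bound
$$\norm{\grad_x^k u_\ell(t)}_2^2 \le \frac{C_T}{t^k}\left(\norm{u_0}_2^2 + \norm{f}_{H^k}^2\right).$$
Fixing $\tau \in (0,T)$, this shows $\{u_\ell\}$ is bounded in $L^\infty([\tau,T];H^m(\bfR^n))$; extracting a weak-$\ast$ convergent subsequence and matching it against the limit already identified in Theorem~\ref{weak_exist} (uniqueness of weak limits) shows that the weak solution $u$ itself inherits this bound for a.e.\ $t \in (0,T]$, and hence $u(t) \in H^m(\bfR^n)$ for every $t > 0$.

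Finally, for continuity in time into $H^m$, I would reuse the translation device from the end of Theorem~\ref{weak_exist}: set $w_s(t) = u(t+s) - u(t)$, which solves \eqref{weak_Par} with forcing $f = 0$ and initial datum $w_s(0) = u(s) - u_0 \in L^2(\bfR^n)$. By uniqueness, $w_s$ is the solution produced by our construction for this data, so the estimate of Lemma~\ref{T1} (with $f = 0$) applies and gives, for each fixed $t \in (0,T)$ and $k \le m$,
$$\norm{\grad_x^k w_s(t)}_2^2 \le \frac{C_T}{t^k}\norm{u(s) - u_0}_2^2.$$
Since $u \in C([0,T];L^2(\bfR^n))$ forces $\norm{u(s) - u_0}_2 \to 0$ as $s \to 0$, every spatial derivative up to order $m$ of $w_s(t)$ tends to zero; the same reasoning applied around an arbitrary base point in $(0,T]$ yields $u \in C((0,T];H^m(\bfR^n))$.

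I expect the main obstacle to be the degeneracy of the bound as $t \to 0^+$: the factor $t^{-k}$ precludes any uniform $H^m$ control down to the initial time, which is precisely why the conclusion is continuity only on the half-open interval $(0,T]$ and why the weak-$\ast$ compactness argument must be localized to compact subintervals $[\tau,T]$. A secondary technical point is the rigorous justification that the higher-order a priori estimate, legitimately derived only for the smooth Galerkin approximations, is inherited by the weak-limit solution; this is handled by lower semicontinuity of the $H^m$ norm under weak-$\ast$ convergence together with the uniqueness already in hand.
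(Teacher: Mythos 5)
Your proposal is correct and takes essentially the same route as the paper, whose entire proof is the remark that the result ``follows by a straightforward repetition of the proof of Theorem~\ref{weak_exist} with the obvious modifications.'' Your write-up simply makes those modifications explicit: apply the higher-order estimates of Lemma~\ref{T1} to the Galerkin approximations, pass to the (weak-$\ast$) limit on compact subintervals $[\tau,T]\subset(0,T]$, and rerun the translation argument $w_s(t)=u(t+s)-u(t)$ to get continuity into $H^m(\bfR^n)$.
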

\begin{proof}
  This follows by a straightforward repetition of the proof of
  Theorem~\ref{weak_exist} with the obvious modifications.
\end{proof}

Of course, if the dimension $n$ satisfies $n < 2m -1$ this result implies classical differentiability of solutions and they satisfy the PDE in the classical sense.
Finally, this result can be easily used to deduce infinite spatial differentiability of the solution assuming $D$ and $f$ satisfy the same
condition.

\begin{theorem}[Infinite Differentiability]
\label{T2}
If $f \in H^\infty(\bfR^n)$, $D \in W^{\infty,\infty}(\bfR^n; \bfR^{n \times n})$,
and $u_0 \in L^2(\bfR^n)$, then for any $T > 0$ arbitrary, any solution of (\ref{Par}) satisfies $u \in C^\infty((0,T] \times \bfR^n)$.
\end{theorem}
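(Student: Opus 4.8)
The plan is to combine the quantitative spatial estimates of Lemma~\ref{T1} with the structure of the equation, trading time derivatives for spatial ones, and then to pass from Sobolev regularity to classical joint smoothness via the Sobolev Embedding Theorem. The spatial half is essentially immediate, so the real content lies in handling the time variable and in upgrading separate smoothness in $t$ and in $x$ to genuine joint smoothness.

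First I would record the spatial regularity. Since $f \in H^\infty(\bfR^n) = \bigcap_m H^m(\bfR^n)$ and $D \in W^{\infty,\infty} = \bigcap_m W^{m,\infty}$, the hypotheses of Lemma~\ref{T1} hold for \emph{every} $m \in \bfN$. Hence for each fixed $t \in (0,T]$ and every $k$ we have $\norm{\grad_x^k u(t)}_2^2 \le C_T t^{-k}(\norm{u_0}_2^2 + \norm{f}_{H^k}^2) < \infty$, so $u(t) \in H^m(\bfR^n)$ for all $m$, i.e. $u(t) \in H^\infty(\bfR^n)$. By the Sobolev Embedding Theorem, $H^m \hookrightarrow C^r(\bfR^n)$ whenever $m > r + n/2$, and therefore $u(t,\cdot) \in C^\infty(\bfR^n)$. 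The higher-order analogue of Theorem~\ref{weak_exist} moreover gives $u \in C((0,T];H^m(\bfR^n))$ for each $m$, so $t \mapsto u(t)$ is continuous into every $H^m$, and hence, through the embedding, into every $C^r(\bfR^n)$.

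Next I would exploit the fact that $D = D(x)$ and $f = f(x)$ are time-independent. Writing $L u := \grad \cdot (D \grad u)$ for the elliptic operator in (\ref{Par}), differentiating the equation repeatedly in $t$ yields $\partial_t^j u = L^j u + L^{j-1} f$ for each $j \ge 1$. Because $D \in W^{\infty,\infty}$, the operator $L$ maps $H^{s+2}$ boundedly into $H^s$ for every $s$, so the right-hand side lies in $H^m$ for all $m$ whenever $u(t) \in H^\infty$. Combining this with the continuity $t \mapsto u(t) \in H^{m+2j}$ established above, a short bootstrap gives $\partial_t^j u \in C((0,T];H^m(\bfR^n))$ for all $j, m \in \bfN$; equivalently, $u$ is $C^\infty$ in $t$ with values in $H^\infty(\bfR^n)$.

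Finally I would assemble joint smoothness. Fix $j$ and a multi-index $\alpha$ and choose $m > |\alpha| + n/2$. Then $t \mapsto \partial_t^j u(t) \in H^m$ is continuous and $H^m \hookrightarrow C^{|\alpha|}(\bfR^n)$, so $t \mapsto \partial_x^\beta \partial_t^j u(t,\cdot)$ is continuous in the sup-norm for every $|\beta| \le |\alpha|$. Continuity of such a $C^r$-valued map forces joint continuity in $(t,x)$, while the relation $\partial_t^j u = L^j u + L^{j-1} f$, together with differentiability in $t$ of the evaluated map, identifies the pointwise classical derivative $\partial_t^j \partial_x^\alpha u(t,x)$ with the corresponding Banach-space derivative. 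Since $j$ and $\alpha$ are arbitrary, all mixed classical partials exist and are jointly continuous, giving $u \in C^\infty((0,T] \times \bfR^n)$. The main obstacle is precisely this last step: bootstrapping time regularity and, more delicately, justifying that the $H^m$- (or $C^r$-) valued time derivatives coincide with honest pointwise partials, so that one obtains genuine joint smoothness rather than mere smoothness in each variable separately. The time-independence of $D$ and $f$ is what keeps the reduction $\partial_t^j u = L^j u + L^{j-1} f$ clean; with time-dependent coefficients one would instead differentiate products and carry along additional lower-order terms.
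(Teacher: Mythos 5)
Your proposal is correct and follows the same route as the paper: apply the higher-order regularity lemma for every $m$ to obtain $u(t)\in H^\infty(\bfR^n)$, then use the equation $\partial_t u = \nabla\cdot(D\nabla u)+f$ to express time derivatives in terms of spatial ones and bootstrap, finishing with Sobolev embedding. The paper's own proof is a one-sentence version of exactly this argument; your write-up simply supplies the details (the identity $\partial_t^j u = L^j u + L^{j-1}f$ and the passage from Banach-space-valued derivatives to joint classical smoothness) that the paper leaves implicit.
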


\begin{remark}
On a bounded domain, it is enough to impose $f \in C^\infty(\bfR^n)$ and $D \in C^\infty(\bfR^n; \bfR^{n \times n})$ to arrive at the same result.
\end{remark}

\begin{proof}
The result follows immediately by applying Theorem \ref{T1} for each $m \in \bfN$, noticing that 
$$ \partial_t u = \nabla \cdot (D \nabla u) + f$$ is continuous, and bootstrapping this property for higher-order time derivatives.
\end{proof}

\begin{remark}
Though we have chosen to demonstrate the method for equations with time-independent coefficients, the same results can be obtained for time-dependent diffusion coefficients $D$ and sources $f$ using the same proof, as long as these functions are sufficiently smooth in $t$.  Additionally, similar arguments can be used to gain regularity of the solution in $t$, as well.  
\end{remark}

\bibliographystyle{AIMS}

\begin{bibdiv}
\begin{biblist}

\bib{Brezis}{book}{
   author={Brezis, Haim},
   title={Functional analysis, Sobolev spaces and partial differential
   equations},
   series={Universitext},
   publisher={Springer},
   place={New York},
   date={2011},
   pages={xiv+599},
   isbn={978-0-387-70913-0},
   review={\MR{2759829 (2012a:35002)}},
}

\bib{Evans}{book}{
   author={Evans, Lawrence C.},
   title={Partial differential equations},
   series={Graduate Studies in Mathematics},
   volume={19},
   edition={2},
   publisher={American Mathematical Society},
   place={Providence, RI},
   date={2010},
   pages={xxii+749},
   isbn={978-0-8218-4974-3},
   review={\MR{2597943 (2011c:35002)}},
}

\bib{Folland}{book}{
   author={Folland, Gerald B.},
   title={Introduction to partial differential equations},
   edition={2},
   publisher={Princeton University Press},
   place={Princeton, NJ},
   date={1995},
   pages={xii+324},
   isbn={0-691-04361-2},
   review={\MR{1357411 (96h:35001)}},
}

\bib{Friedman}{book}{
   author={Friedman, Avner},
   title={Partial differential equations of parabolic type},
   publisher={Prentice-Hall Inc.},
   place={Englewood Cliffs, N.J.},
   date={1964},
   pages={xiv+347},
   review={\MR{0181836 (31 \#6062)}},
}

\bib{GT}{book}{
   author={Gilbarg, David},
   author={Trudinger, Neil S.},
   title={Elliptic partial differential equations of second order},
   series={Classics in Mathematics},
   note={Reprint of the 1998 edition},
   publisher={Springer-Verlag},
   place={Berlin},
   date={2001},
   pages={xiv+517},
   isbn={3-540-41160-7},
   review={\MR{1814364 (2001k:35004)}},
}

\bib{Han}{book}{
   author={Han, Qing},
   title={A basic course in partial differential equations},
   series={Graduate Studies in Mathematics},
   volume={120},
   publisher={American Mathematical Society},
   place={Providence, RI},
   date={2011},
   pages={x+293},
   isbn={978-0-8218-5255-2},
   review={\MR{2779549 (2012j:35001)}},
}	
	
\bib{John}{book}{
   author={John, Fritz},
   title={Partial differential equations},
   series={Applied Mathematical Sciences},
   volume={1},
   edition={4},
   publisher={Springer-Verlag},
   place={New York},
   date={1991},
   pages={x+249},
   isbn={0-387-90609-6},
   review={\MR{1185075 (93f:35001)}},
}

\bib{Lieberman}{book}{
   author={Lieberman, Gary M.},
   title={Second order parabolic differential equations},
   publisher={World Scientific Publishing Co. Inc.},
   place={River Edge, NJ},
   date={1996},
   pages={xii+439},
   isbn={981-02-2883-X},
   review={\MR{1465184 (98k:35003)}},
}

\bib{NFP}{article}{
   author={Alcantara Felix, Jose Antonio},
   author={Calogero, Simone},
   author={Pankavich, Stephen},
   title={Spatially homogeneous solutions of the Vlasov-Nordstrom-Fokker-Planck system},
   date={2014},
   pages={submitted for publication}
}

\bib{RVMFP}{article}{
   author={Michalowski, Nicholas},
   author={Pankavich, Stephen},
   title={Global existence for the one and one-half dimensional
   relativistic Vlasov-Maxwell-Fokker-Planck system},
   date={2014},
   pages={submitted for publication}
}

\bib{RR}{book}{
   author={Renardy, Michael},
   author={Rogers, Robert C.},
   title={An introduction to partial differential equations},
   series={Texts in Applied Mathematics},
   volume={13},
   edition={2},
   publisher={Springer-Verlag},
   place={New York},
   date={2004},
   pages={xiv+434},
   isbn={0-387-00444-0},
   review={\MR{2028503 (2004j:35001)}},
}

\bib{VMFP}{article}{
   author={Calogero, Simone},
   author={Pankavich, Stephen},
   title={Classical Solutions to the one and one-half dimensional Vlasov-Maxwell-Fokker-Planck system},
   date={2014},
   pages={submitted for publication}
}

\end{biblist}
\end{bibdiv}


\medskip
Received xxxx 20xx; revised xxxx 20xx.
\medskip

\end{document}